\newtheorem{lemma}{Lemma}[section]
\newtheorem{theorem}{Theorem}[section]
\newtheorem{definition}{Definition}
\theoremstyle{remark}
\newtheorem{remark}{Remark}[section]
\title{A Topological Proof of the Archimedean Axiom for Archimedean Copulas}
\author{Victory Idowu}
\date{January 2025}
\begin{document}
\maketitle

\begin{abstract}
Archimedean copulas are a popular type of copulas in which a variant of the Archimedean axiom apply. We provide a topological proof of the Archimedean Axiom which is applicable for non-continuous distribution functions. 
\end{abstract}

\textbf{keywords}

 Copula; Archimedean axiom; Archimedean copulas;

\section{Introduction}
Archimedean copulas are a popular type of copulas in which the dependence multivariate distribution of random variables can be linked through an explicit dependence structure, determined by a parameter $\theta$ (e.g., \citep{durante2016principles, joe2014dependence, nelsen2006introduction} and the references therein). Through increasing or decreasing $\theta$, the extent of dependence between the random variables can be easily modified. 

Properties of the Archimedean copulas have been studied extensively under continuous distribution functions but limited work has been done for discrete or mixed distribution functions. The study of copula's properties for non-continuous distributions can be approached via topological arguments, like Sklar's Theorem see \citep{durante2013topological}. This is the approach taken in this paper.

\section{Copulas and the Archimedean axiom}
In this section, we provide preliminary notions on copulas and Archimedean copulas. In this paper, we denote $d \in \mathbb{Z}^+$ and $\bar{\mathbb{R}}^d := [-\infty, + \infty]^d$. 
\begin{definition}[H-volume, $V_H$ \citep{nelsen2006introduction, cherubini2009computing}]
Let $H: \bar{\mathbb{R}}^d \rightarrow \mathbb{R}$ and $[\mathbf{a}, \mathbf{b}] := [a_1, b_1] \times [a_2, b_2] \times \ldots \times [a_d, b_d]$. Then the H-volume, $V_H$ is defined as
\begin{equation*}
    V_H ([\mathbf{a}, \mathbf{b}]) = \sum_\mathbf{c} \mathrm{sign}(\mathbf{c} ) H(\mathbf{c} )
\end{equation*}
where $\mathbf{c}$ are the vertices of the box $[\mathbf{a}, \mathbf{b}]$ and 
\begin{equation*}
    \mathrm{sign}(\mathbf{c} ) = 
    \begin{cases}
    1, &\text{ if } c_k = a_k \text{ for an even number of } $k$ \text{'s} \\
    -1, &\text{ if } c_k = a_k \text{ for an odd number of } $k$ \text{'s}.
    \end{cases}    
\end{equation*}
\end{definition}

We can define the d-dimensional cumulative distribution function $F$ (d.fs) in terms of the $V_H$ volume, see for instance \citet{durante2013topological}. In this case $F$ is a distribution function if it satisfies the following definition. 
\begin{definition}[Cumulative distribution function $F$\citep{durante2013topological}]
A d-dimensional cumulative distribution function $F: \bar{\mathbb{R}}^d \rightarrow [0,1]$ satsifies the following:
\begin{enumerate}
    \item for every $\mathbf{x} \in \bar{\mathbb{R}}$ if at least one argument of $\mathbf{x}$ is 0 then $F(\mathbf{x}) = 0$ and we define $F(+ \infty, + \infty, \ldots, + \infty) = 1 $
    \item for every $j \in \{1, 2, \ldots d\} $ and for all $x_1, \ldots x_{j-1}, x_{j+1}, \ldots, x_d \in \bar{\mathbb{R}}$, $t \in \mathbb{R}$, the mapping 
    \item[] $t \mapsto F(x_1,  \ldots x_{j-1}, x_{j+1}, \ldots, x_d) $ is right continuous
    \item $V_F([\mathbf{a}, \mathbf{b}]) \geq 0$, for every box $[\mathbf{a}, \mathbf{b}]$.
\end{enumerate}    
\end{definition}

A copula is a representation of $F$ in terms of a dependence structure and the underlying distribution functions. 
\begin{definition}
A $d-$dimensional Copula $C = C(u,v)$ is the restriction to $[0,1]^d$ and $F_i \sim \mathrm{Uniform}(0,1)$ for all $i \in \{1, \ldots d\}$, see \cite{durante2013topological, kim2013normal}. 
\end{definition}

Clearly \citep{nelsen2006introduction}, $V_C ([0,u] \times [0,v]) = C(u,v)$. The uniqueness of the copula for continuous distributions is established by Sklar's theorem \citep{sklar1959fonctions}. Existence is only guaranteed for non-continuous d.fs \citep{joe2014dependence}.

\begin{definition}[Archimedean copulas \citep{joe2014dependence}]
A copula $C$ is considered as an Archimedean copula if it can be written in the form
\begin{equation*}
    C(\mathbf{u}) = \varphi \left( \sum_{j=1}^d \varphi^{[-1]} (u_j) \right), \qquad \mathbf{u} \in [0,1]^d
\end{equation*}
where generator function $\varphi$ satisfies the following:
\begin{enumerate}
  \item $\varphi: \left[ 0, 1\right] \rightarrow \left[ 0, \infty \right]$
  \item $\varphi$ is a continuous and strictly decreasing function in the interval $\left[0,1 \right]$
  \item $\varphi$ is such that $\varphi (1) = 0$.
\end{enumerate}
The pseudo inverse of the generator  is defined as:
\begin{equation}\label{phi-additive-inverse}
  \varphi^{[-1]} (x) =     \begin{cases}
                        \varphi^{-1} (x) & 0 \leq x \leq \varphi(0) \\
                        0 & \varphi (0) < x \leq \infty
                        \end{cases}
\end{equation}
\end{definition}

Any copula can have a volume defined by its recursive powers of one distribution function, this is called the $C$-power. More formally, 
\begin{definition}[$C$-power \citep{nelsen2006introduction}]
For an Archimedean copula $C_\varphi$ and any $u \in [0,1]$. Define $f_1(u) := u $ and $n+1$ $C$-power as $f_{n+1}(u) := C(u, f_n(u))$.
\end{definition}

The sub-class of Archimedean copulas are called Archimedean as their construct leads to the following version of the Archimedean axiom, \citep{nelsen2006introduction}: 
\begin{theorem} \label{c1c-thm-ArchimedeanAxiom}
The Archimedean axiom for $([0,1],C)$ is, for any two $u,v \in (0,1)$, there exists $n \in \mathbb{Z}^+$ such that $f_n(u) < v$. 
\end{theorem}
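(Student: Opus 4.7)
The plan is to obtain an explicit closed form for the iterated $C$-power $f_n(u)$ in terms of the generator $\varphi$, and then read off the conclusion from the asymptotic behaviour of $\varphi$ at large arguments.

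The first step is to prove by induction on $n$ that
\[
f_n(u) \;=\; \varphi\bigl(n \cdot \varphi^{[-1]}(u)\bigr), \qquad n \geq 1, \; u \in [0,1].
\]
The base case $n = 1$ reduces to the identity $u = \varphi(\varphi^{[-1]}(u))$, which holds by the definition of the pseudo-inverse in \eqref{phi-additive-inverse}. For the inductive step I would substitute the Archimedean representation of $C$ into the recursion $f_{n+1}(u) = C(u,f_n(u))$ to get $\varphi\bigl(\varphi^{[-1]}(u) + \varphi^{[-1]}(f_n(u))\bigr)$, apply the induction hypothesis, and then collapse the inner composition $\varphi^{[-1]}\circ\varphi$ using that it acts as the identity on the invertible range of $\varphi$.

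With the closed form in hand, the conclusion is a short limit argument. For $u \in (0,1)$, strict monotonicity of $\varphi$ together with $\varphi(1) = 0$ force $\varphi^{[-1]}(u) > 0$, so $n \cdot \varphi^{[-1]}(u) \to \infty$ as $n \to \infty$. Continuity and monotonicity of $\varphi$ then yield $\varphi\bigl(n \cdot \varphi^{[-1]}(u)\bigr) \to 0$, either as a genuine limit in the strict case $\varphi(0) = \infty$, or by the clamping clause of the pseudo-inverse in the non-strict case where the argument eventually exceeds $\varphi(0)$. Hence $f_n(u) \to 0$, and given any $v \in (0,1)$ with $v > 0$ one can choose $N$ large enough that $f_N(u) < v$.

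The main obstacle I anticipate is the inductive step across the transition between the invertible region of $\varphi$ and the saturated region where $\varphi^{[-1]}$ is clamped to zero. If $n \cdot \varphi^{[-1]}(u)$ already lies past this threshold, then $f_n(u) = 0$ and one has to invoke separately the grounding property $C(u,0) = 0$ of a copula to see that $f_{n+1}(u) = 0$ as well, so that the closed form survives. Verifying that the formula remains valid uniformly across strict and non-strict Archimedean generators is the delicate part; once that bookkeeping is in place, the topological content---continuity of $\varphi$ and the limit $\varphi^{[-1]}(t) \to 0$ as $t \to \infty$---makes the remainder of the argument essentially automatic and, crucially, insensitive to whether the underlying marginals are continuous.
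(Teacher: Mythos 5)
Your proposal is correct in substance, but it takes a genuinely different route from the paper. You reconstruct the explicit closed form for the $C$-power, $f_n(u) = \varphi^{[-1]}\bigl(n\,\varphi(u)\bigr)$ in the convention used in the proof of Lemma \ref{c1c-lemma-VCbounded} (note that the paper's definition of an Archimedean copula writes $C$ with the roles of $\varphi$ and $\varphi^{[-1]}$ interchanged relative to that lemma, and your formula inherits this ambiguity, so you should fix one convention and state it), and you then let $n\,\varphi(u) \to \infty$. This is essentially the classical algebraic proof in \citet{nelsen2006introduction}, including the one genuinely delicate point, which you correctly isolate: once $n\,\varphi(u)$ crosses the threshold $\varphi(0)$ of a non-strict generator, the induction must be rescued by the grounding identity $C(u,0)=0$ so that the closed form persists with $f_n(u)=0$. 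The paper deliberately avoids the closed form altogether: it shows the sequence $\{f_n(u)\}$ is non-increasing (Lemma \ref{c1c-lemma-noninc}) and bounded, extracts a limit $L$ by monotone convergence, and rules out $L>0$ through the fixed-point contradiction $L = C(u,L) < L$ supplied by Lemma \ref{c1c-lemma-VCbounded}, together with a Darboux-style decomposition of $V_C$ over a partition refining around the discontinuities. Your approach buys more: it is quantitative (any $N > \varphi(v)/\varphi(u)$ works) and it makes transparent why continuity of the marginals is irrelevant --- the generator $\varphi$ is continuous by definition of an Archimedean copula, so the entire argument lives inside the generator calculus. The paper's soft argument buys a template that uses only monotonicity, boundedness, and the strict inequality of Lemma \ref{c1c-lemma-VCbounded}, which is the structure the author wishes to exhibit as ``topological''; but both proofs ultimately rest on the same defining properties of $\varphi$, and yours is complete once the convention for $\varphi$ versus $\varphi^{[-1]}$ is pinned down consistently.
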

The proof of Theorem \ref{c1c-thm-ArchimedeanAxiom} for continuous d.fs is given in \citep{nelsen2006introduction}.

\begin{remark}
The Archimedean axiom is not valid at the idempotents $\{0,1\}$. When $u=1$, $f_{n+1}(u) = f_{n}(u)$, for all $n$. When $u=0$, the recursive relation $f_{n+1} = 0$ immediately stabilizes at 0 and there is no diminishing process. In both cases, the sequence $\{f_n(u)\}$ does not decrease or converge to 0 and the Archimedean axiom does not apply. 
\end{remark}

\section{Proof of Archimedean axiom}
Our goal is to show for Archimedean copulas that the Archimedean axiom also holds for non-continuous $F$. As $F$ is non-continuous, there may be regions in which $C$ and $V_C$ is also non-continuous. Hence, we develop a topological proof using $V_C$ that applies to non-continuous distributions. First, we present lemmas that will help us establish the proof. Then, we consider the limit of $V_C$ for non-continuous $F$. From which we derive how the limiting distribution of recursive distribution of $V_C$ for Archimedean copulas.  

\begin{lemma}\label{c1c-lemma-VCbounded}
For any Archimedean copula $C$, $V_C([0,u] \times [0,v]) < v$ for any $v<1$.
\end{lemma}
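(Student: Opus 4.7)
The plan is to reduce the $H$-volume to a single value of $C$ and then exploit the strict monotonicity of the Archimedean generator to produce the strict inequality. The statement is naturally read with $u,v\in(0,1)$, since $C(1,v)=v$ and $C(u,0)=0$ make the strict inequality fail at the idempotents.

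First, I would observe that any copula $C$ vanishes whenever one of its arguments equals $0$ (it is a distribution function with uniform margins), so the four-term defining sum for the $H$-volume collapses to
\begin{equation*}
V_C([0,u]\times[0,v]) \;=\; C(u,v) - C(u,0) - C(0,v) + C(0,0) \;=\; C(u,v).
\end{equation*}
The task thereby reduces to showing $C(u,v) < v$.

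Next, I would insert the standard Archimedean representation $C(u,v) = \varphi^{[-1]}\bigl(\varphi(u)+\varphi(v)\bigr)$, which is equivalent to the form in the definition (with $\varphi$ and its pseudo-inverse playing dual roles on $[0,1]$ and $[0,\varphi(0)]$). Because $\varphi$ is continuous and strictly decreasing on $[0,1]$ with $\varphi(1)=0$, we have $\varphi(u)>0$ whenever $u<1$, so
\begin{equation*}
\varphi(u)+\varphi(v) \;>\; \varphi(v).
\end{equation*}
The inequality $C(u,v)<v$ then follows from the decreasing behaviour of $\varphi^{[-1]}$, after a case split: if $\varphi(u)+\varphi(v) \le \varphi(0)$, strict monotonicity of $\varphi^{-1}$ on $[0,\varphi(0)]$ gives $\varphi^{[-1]}\bigl(\varphi(u)+\varphi(v)\bigr) < \varphi^{[-1]}(\varphi(v)) = v$; if instead $\varphi(u)+\varphi(v) > \varphi(0)$, then $\varphi^{[-1]}\bigl(\varphi(u)+\varphi(v)\bigr) = 0 < v$ because $v>0$.

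The main obstacle is the bookkeeping around the pseudo-inverse. For a non-strict generator one has $\varphi(0)<\infty$, and the saturation of $\varphi^{[-1]}$ at $0$ must be handled separately as in the second case above; one must also confirm that $v>0$ is used only to rule out a vacuous bound at the endpoint, which is consistent with the reading $v\in(0,1)$. The underlying mechanism is nonetheless simple: strict monotonicity of $\varphi$ together with $\varphi(1)=0$ forces $C(u,v)$ to sit strictly below $v$ as soon as $u$ is bounded away from the idempotent $u=1$.
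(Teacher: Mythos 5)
Your proposal is correct and rests on the same core mechanism as the paper's own proof: reduce the volume to $C(u,v)$ via groundedness, then use strict monotonicity of the generator together with $\varphi(1)=0$ to push $C(u,v)$ strictly below the relevant margin. Two places where you are more careful than the paper are worth flagging. First, the paper's proof derives $\varphi(u)+\varphi(v)>\varphi(u)$ and concludes $C(u,v)<u$, then declares the stated bound $<v$ ``immediate''; as written this bounds the wrong variable, and the hypothesis it actually uses is muddled (the displayed inequality needs $\varphi(v)>0$, i.e.\ $v<1$, while the bound $C(u,v)<v$ claimed in the lemma needs $\varphi(u)>0$, i.e.\ $u<1$). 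Your version runs the argument with the roles assigned correctly and makes explicit that the statement should be read with $u,v\in(0,1)$. Second, the paper applies $\varphi^{[-1]}$ as though it were strictly decreasing on all of $[0,\infty]$, which fails for non-strict generators where $\varphi^{[-1]}$ saturates at $0$ beyond $\varphi(0)$; your case split on whether $\varphi(u)+\varphi(v)\le\varphi(0)$ closes that gap, using $v>0$ to keep the inequality strict in the saturated case. So the route is the same, but your writeup repairs the two loose steps in the paper's argument.
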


\begin{proof}
As the generator function $\varphi$ is strictly decreasing, $\varphi(u) > \varphi(1) = 0$ for any $u \in (0,1)$. Hence, $\varphi(u) + \varphi(v) > \varphi(u) $ which implies $\varphi^{[-1]}( \varphi(u) + \varphi(v))  < \varphi^{[-1]}(\varphi(u)) = u$. Thus, $C(u,v) < u$ when $v<1$, from which the result is immediate. \qed
\end{proof}

\begin{lemma}\label{c1c-lemma-noninc}
For  $n \geq 1, u \in(0,1)$, $f_{n+1}(u) \leq f_n(u)$.
\end{lemma}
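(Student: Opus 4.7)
The plan is to induct on $n$, using Lemma \ref{c1c-lemma-VCbounded} at each stage together with the boundary behaviour $C(u,0)=0$ of a copula. The lemma already does essentially all the work: it says $V_C([0,u]\times[0,v]) = C(u,v) < v$ whenever $v<1$, so whenever we know $f_n(u)<1$ we will immediately get $f_{n+1}(u)<f_n(u)$.

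For the base case $n=1$, I write $f_2(u)=C(u,u)=V_C([0,u]\times[0,u])$. Since $u\in(0,1)$, applying Lemma \ref{c1c-lemma-VCbounded} with $v:=u$ yields $f_2(u)<u=f_1(u)$, which is even stronger than the required inequality.

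For the inductive step, I assume $f_k(u)\leq f_{k-1}(u)$ for all $k\leq n$. Iterating back to the base gives $f_n(u)\leq f_1(u)=u<1$, so $f_n(u)\in[0,1)$. I then split into two cases. If $f_n(u)=0$, then $f_{n+1}(u)=C(u,0)=0=f_n(u)$ (the boundary condition from the definition of a copula/distribution function), and the inequality holds with equality; this is precisely why the statement is phrased with $\leq$ rather than $<$. If instead $f_n(u)>0$, then $f_n(u)\in(0,1)$ and Lemma \ref{c1c-lemma-VCbounded} applied with $v:=f_n(u)$ gives $f_{n+1}(u)=V_C([0,u]\times[0,f_n(u)])<f_n(u)$, closing the induction.

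I do not expect a serious obstacle here; the content of the claim is essentially packaged inside Lemma \ref{c1c-lemma-VCbounded}. The only mildly delicate point is ensuring that the induction hypothesis is strong enough to guarantee $f_n(u)<1$ so that the lemma can be invoked at the next stage, and remembering to allow equality in the edge case where the recursion has already reached $0$ (which can genuinely happen for non-strict generators with $\varphi(0)<\infty$ once $n\varphi(u)>\varphi(0)$).
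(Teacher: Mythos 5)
Your proof is correct, but it routes through different machinery than the paper's. The paper's induction uses only generic copula facts: the base case invokes the Fr\'echet upper bound $C(u,v)\leq\min(u,v)$, and the inductive step uses monotonicity of $v\mapsto C(u,v)$ (a consequence of 2-increasingness) to pass from $f_n(u)\leq f_{n-1}(u)$ to $C(u,f_n(u))\leq C(u,f_{n-1}(u))$. You instead apply the Archimedean-specific Lemma \ref{c1c-lemma-VCbounded} at \emph{every} stage, using the induction hypothesis only to propagate $f_n(u)\leq u<1$, and you handle the absorbed state $f_n(u)=0$ via the grounded boundary condition $C(u,0)=0$. Your route buys a strictly stronger conclusion (strict decrease until the sequence hits $0$, which is in fact what the later convergence argument really wants), and your explicit treatment of the $f_n(u)=0$ case for non-strict generators is a point the paper glosses over. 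The trade-off is that you inherit the imprecision of Lemma \ref{c1c-lemma-VCbounded} as stated: its printed proof actually derives $C(u,v)<u$ for $v<1$, whereas you need the symmetric conclusion $C(u,v)<v$, which requires $u<1$ and $v>0$ (at $v=0$ the stated inequality is false). Since you fix $u\in(0,1)$ and only invoke the lemma when $f_n(u)\in(0,1)$, you are working exactly in the regime where the corrected statement holds, so your argument stands; but you should note that you are using the lemma's statement rather than what its proof literally establishes. The paper's monotonicity-based step is more robust in that it holds for every copula, Archimedean or not.
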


\begin{proof}
This is shown by induction. Base case, $f_1(u) = u$ and $f_2(u) = C(u, u) = C(u, f_1(u)) \leq f_1(u)$ as for any copula $C(u,v) \leq u$ for all $v \leq u$.
Inductive step, assume $f_n(u) \leq f_{n-1}(u)$. Thus by monotonicity, 
\begin{equation*}
f_{n+1}(u) = C(u, f_n (u)) \leq C(u, f_{n-1}(u)) = f_n(u).
\end{equation*} 
Hence, $f_{n+1}(u) \leq f_n(u)$ for all $n$. \qed
\end{proof}

\section{Proof of Theorem \ref{c1c-thm-ArchimedeanAxiom}}
Suppose $C$ is has discontinuities over the box $B = [0,u] \times [0,v]$. Consider the partition of $B = \cup_{k,j} B_{k,j}$, where $C$ is continuous. Define, 
\begin{equation*}
B_k = [u_k, u_{k+1}] \times [v_j, v_{j+1}], \qquad k = 0, \ldots, m-1, \; j=0, \ldots, n-1,
\end{equation*}
where $0 = u_0 < u_1 < \ldots u_m = u$ and $0 = v_0 < v_1 < \ldots v_p = v$. For each $B_{k,j}$, 
\begin{equation*}
  V_C([u_k, u_{k+1}] \times [v_j, v_{j+1}]) =  C(u_{k+1}, v_{j+1}) - C(u_k, v_{j+1}) - C(u_{k+1}, v_j)  + C(u_k, v_{j})
\end{equation*}

As $B_k$ becomes finer, i.e. $\max(u_{k+1} - u_k, v_{j+1} - v_j) \rightarrow 0$, 
\begin{equation*}
    \lim_{m,p \rightarrow \infty} \sum_{j=0}^{m-1} \sum_{k=0}^{p-1} V_C (B_{k,j}) = V_C(B)
\end{equation*}

Convergence is guaranteed by the compactness of $[0,1]^2$ under the product topology and the continuity of $V_C$-volumes within each sub-region that $C(u,v)$ is defined. 

Define the recursive volume function as:
\begin{equation*}
    V_C^n(u) := V_C([0,u]\times [0, f_n(u)]
\end{equation*}

As $f_n(u)$ is non-decreasing, 
\begin{equation*}
    V_C^{n+1}(u) \leq  V_C^n(u)
\end{equation*}
thus, $V_C^n(u)$ is a non-increasing sequence with respect to $n$.

It follows that $f_n(u) \in [0,1]$, as $C(u,v)$ is defined on $[0,1]^2$. By Lemma \ref{c1c-lemma-noninc} $f_n(u)$ is non-increasing, it is also bounded below by $0$, it converges to a limit
\begin{equation*}
    \lim_{n \rightarrow \infty} f_n(u) = L, \qquad L \geq 0
\end{equation*}
where $L$ is the limit.  If $L >0$, then \begin{equation*}
    L = \lim_{n \rightarrow \infty}     f_n(u) = \lim_{n \rightarrow \infty} C(u, f_n (u))  = C(u,L)
\end{equation*}
However, by Lemma \ref{c1c-lemma-VCbounded}, $V_C([0,u] \times [0,v]) < v$ for $v >0$ and sufficiently large $n$. This creates a contradiction and hence, $L=0$. Hence,
\begin{equation*}
f_n(u) \rightarrow 0  \text{ and } \lim_{n \rightarrow \infty} V_C^n(u) = V_C([0,u] \times [0,0]) = 0. 
\end{equation*}

Thus there exists an $N$, for any $v \in (0,1)$, such that $f_N(u) < v$, which completes the proof. \qed

\end{document}